\providecommand{\tabularnewline}{\\}
\theoremstyle{plain}
\newtheorem{lem}{\protect\lemmaname}
\theoremstyle{plain}
\newtheorem{prop}{\protect\propositionname}
\theoremstyle{remark}
\newtheorem{rem}{\protect\remarkname}
\providecommand{\lemmaname}{Lemma}
\providecommand{\propositionname}{Proposition}
\providecommand{\remarkname}{Remark}
\def\emptystring{}
\renewcommand{\abstract}[6]{\setcounter{equation}{0}
\hbox{}\begin{center}\vskip-2.4truecm
{\small\sc Forum ``Math-for-Industry" 2021}\\[1mm]
{\tiny \bf December 13-16, 2021, Vietnam Institute for Advanced Study in Mathematics}
\end{center}\vspace{1mm}\hrule\vspace{1.2truecm}
\begin{center}
 {\Large\bf #4}\\\vspace{5mm}{\large\bf #2 #1}\\\vspace{2mm}{#3}\\
 \ifthenelse{\equal{#5}{\emptystring}}{}
 {\normalsize \vspace{2mm}(joint work with #5)}
\end{center}
\vspace{7mm} #6}
\begin{document}
\thispagestyle{empty}

%%%%%%%%%%%%%%%%%
%  \abstract is a command of 6 arguments that 
%  should be completed via the following sketch 
%  (the 5th argument should be left empty if you 
%   have no coauthor):
%                                                 
%   \abstract{Family name}{Given name(s)}{Institution}{Title of talk}{Coauthor(s)}{Text of Abstract}
%%%%%%%%%%%%%%%%%

%Example
\abstract{NGUYEN}{Hien Duy}
{School of Mathematics and Physics, University of Queensland, St.
Lucia, Australia \\
Department of Mathematical and Physical Sciences, La Trobe University,
Bundoora, Australia}
{Finite sample inference for generic autoregressive models}{}

Autoregressive models are a class of time series models that are important in both applied and theoretical statistics. Typically, inferential
devices such as confidence sets and hypothesis tests for time series
models require nuanced asymptotic arguments and constructions. We
present a simple alternative to such arguments that allow for the
construction of finite sample valid inferential devices, using a data
splitting approach. We prove the validity of our constructions, as
well as the validity of related sequential inference tools. A set
of simulation studies are presented to demonstrate the applicability
of our methodology.
\section{Introduction}

Let $\left(\Omega,\mathcal{F},\text{Pr}\right)$ be a probability
space, and define a sequence of random variables $\left(X_{t}\left(\omega\right)\right)_{t\in\left[T\right]}$
to be a time series, indexed by $t\in\left[T\right]=\left\{ 1,\dots,T\right\} $,
where $X_{t}=X_{t}\left(\omega\right)\in\mathbb{X}$ for some space
$\mathbb{X}$. We suppose that the time series $\left(X_{t}\right)_{t\in\left[T\right]}$
is order $p\in\mathbb{N}$ autoregressive and parametric, in the sense
that for every $\mathbb{A}\subseteq\mathbb{X}^{p}$,
\[
\text{Pr}\left(\omega:\left(X_{t}\left(\omega\right)\right)_{t\in\left[p\right]}\in\mathbb{A}\right)=\int_{\mathbb{A}}f\left(x_{1},\dots,x_{p};\theta_{0}\right)\text{d}\bm{x}_{1\dots p}\text{,}
\]
and for each $\mathbb{B}\subseteq\mathbb{X}$ and $t>p$,
\[
\text{Pr}\left(\omega:X_{t}\left(\omega\right)\in\mathbb{B}|\mathcal{F}_{t-1}\right)=\int_{\mathbb{B}}f\left(x_{t}|\bm{x}_{t-p\dots t-1};\theta_{0}\right)\text{d}x_{t}\text{.}
\]
Here, $\theta_{0}\in\mathbb{T}$ is a parameter that characterizes
the marginal and conditional probability density functions (PDFs)
\[
f\left(x_{1},\dots,x_{p};\theta_{0}\right)\text{ and }f\left(x_{t}|\bm{x}_{t-p\dots t-1};\theta_{0}\right)\text{, for each }t>p\text{,}
\]
where $\bm{x}_{a\dots b}=\left(x_{a},x_{a+1},\dots,x_{b-1},x_{b}\right)$,
for $a,b\in\mathbb{N}$ such that $a<b$. The symbol $\mathcal{F}_{t}=\sigma\left(X_{1},\dots,X_{t}\right)$
indicates the sigma algebra generated by the random variables $\left(X_{i}\left(\omega\right)\right)_{i\in\left[t\right]}$.
The characterization thus allows us to write the PDF of the
time series $\bm{X}_{T}=\left(X_{t}\right)_{t\in\left[T\right]}$
as
\[
f\left(\bm{x}_{T};\theta_{0}\right)=f\left(x_{1},\dots,x_{p};\theta_{0}\right)\prod_{t=p+1}^{T}f\left(x_{t}|\bm{x}_{t-p\dots t-1};\theta_{0}\right)\text{.}
\]

In this work, we concern ourselves with the problem of drawing inference
about $\theta_{0}$, given that we do not know its value. Specifically,
we are concerned with the construction of $100\left(1-\alpha\right)\%$
confidence sets of the form $\mathcal{\mathscr{C}}^{\alpha}\left(\bm{X}_{T}\right)\subseteq\mathbb{T}$,
where
\[
\text{Pr}_{\theta_{0}}\left(\theta_{0}\in\mathscr{C}^{\alpha}\left(\bm{X}_{T}\right)\right)\ge1-\alpha\text{,}
\]
for any $\alpha\in\left(0,1\right)$. Here, $\text{Pr}_{\theta}$
indicates the probability measure under the assumption that the PDF
of $\bm{X}_{T}$ has form $f\left(\bm{x}_{T};\theta\right)$. We shall
also denote the associated expectation operator by $\text{E}_{\theta}$.

Furthermore, we are interested in testing hypotheses of the form
\begin{equation}
\text{H}_{0}:\theta_{0}\in\mathbb{T}_{0}\text{ versus }\text{H}_{1}:\theta_{0}\in\mathbb{T}_{1}\text{,}\label{eq: hypotheses}
\end{equation}
where $\mathbb{T}_{0},\mathbb{T}_{1}\subseteq\mathbb{T}$ and $\mathbb{T}_{0}\cap\mathbb{T}_{1}=\varnothing$.
Here, we wish to construct valid $P$-values $P_{T}$, where 
\[
\sup_{\theta\in\mathbb{T}_{0}}\text{Pr}_{\theta}\left(P_{T}\le\alpha\right)\le\alpha\text{.}
\]

In order to construct our inference devices, we follow the work of
\cite{Wasserman:2020aa}, who considered the construction of finite
sample valid confidence sets and hypotheses for independent and identically
distributed data (IID), using a data splitting construction with generic
estimators. Due to the lack of reliance on any estimator specific
properties, the authors of \cite{Wasserman:2020aa} refer to their inference
procedures as universal inference (UI).

The UI construction consists of demonstrating that a split data likelihood
ratio construction is an $E$-value, in the sense of \cite{Vovk:2021wm},
and \cite{Koolen2021Log-optimal-any}; i.e., a positive random variable
with expectation less than or equal to 1. The UI construction is extremely
flexible and has been adapted for construction of inferential devices
using composite likelihood ratios \cite{Nguyen2021Universal-infer}
and empirical Bayesian likelihoods \cite{Nguyen2021Finite-sample-i}.
We note that in the simple case of confidence sets for linear first
order autoregressive models, our constructions can be compared to
the finite sample results of \cite{Vovk:2007aa} and \cite[Sec. 4.1]{Bercu:2015aa}.

Besides our constructions of conventional confidence sets and $P$-values,
using the same construction as that of \cite{Wasserman:2020aa}, we
also provide anytime valid confidence set and $P$-value sequences
for sequential estimation from online data, in the spirit of \cite{Johari:2017aa}.
We demonstrate the applicability of some of our constructions via numerical examples.

The paper proceeds as follows. In Section 2, we present our finite
sample confidence set and $P$-value constructions, as well as their
anytime valid counterparts. In Section 3, applications of some of
our constructions are provided via numerical examples. Final remarks
are then provided in Section 4.

\section{Finite sample inference devices}

Let us split $\bm{X}_{T}$ into two contiguous subsequences $\bm{X}_{T}^{1}=\left(X_{1},\dots,X_{T_{1}}\right)$
and $\bm{X}_{T}^{2}=\left(X_{T_{1}+1},\dots,X_{T}\right)$, where
$T_{1}\ge p$. We shall also write $T_{2}=T-T_{1}$. Further, let
$\hat{\Theta}_{T}$ be a generic random estimator, such that
\[
\hat{\Theta}_{T}=\hat{\theta}\left(\bm{X}_{T}^{1}\right)\text{,}
\]
for some function $\hat{\theta}:\mathbb{X}^{T_{1}}\rightarrow\mathbb{T}$,
and define the likelihood ratio statistic
\[
R_{T}\left(\theta\right)=\frac{L\left(\hat{\Theta}_{T};\bm{X}_{T}\right)}{L\left(\theta;\bm{X}_{T}\right)}\text{,}
\]
where
\[
L\left(\theta;\bm{X}_{T}\right)=\prod_{t=T_{1}+1}^{T}f\left(X_{t}|\bm{X}_{t-p\dots t-1};\theta\right)
\]
is the conditional likelihood of $\left[\bm{X}_{T}^{2}|\bm{X}_{T}^{1}\right]$.
\begin{lem}
\label{lem: main}For any $\theta\in\mathbb{T}$, $\mathrm{E}_{\theta}\left[R_{T}\left(\theta\right)\right]\le1$.
\end{lem}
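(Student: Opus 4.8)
The plan is to show that the expected value of the likelihood ratio statistic is at most 1 by conditioning on the first subsequence $\bm{X}_T^1$, which holds the estimator $\hat{\Theta}_T$ fixed, and then recognizing the inner conditional expectation as an integral of a density ratio against the true density. The key observation is that the estimator $\hat{\Theta}_T = \hat{\theta}(\bm{X}_T^1)$ depends only on the first block, whereas the likelihood ratio compares the conditional likelihood of the second block $\bm{X}_T^2$ under two parameter values. Once we freeze $\bm{X}_T^1$, the numerator $L(\hat{\Theta}_T; \bm{X}_T)$ uses a \emph{fixed} (though data-dependent) parameter value, and the whole problem reduces to bounding an expectation of a conditional likelihood ratio.

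First I would write the total expectation as an iterated expectation, conditioning on $\mathcal{F}_{T_1} = \sigma(X_1, \dots, X_{T_1})$:
\begin{equation}
\mathrm{E}_\theta\left[R_T(\theta)\right] = \mathrm{E}_\theta\left[\mathrm{E}_\theta\left[\left.\frac{L(\hat{\Theta}_T; \bm{X}_T)}{L(\theta; \bm{X}_T)}\right|\mathcal{F}_{T_1}\right]\right].
\end{equation}
Since $\hat{\Theta}_T$ is $\mathcal{F}_{T_1}$-measurable, on the event where $\hat{\Theta}_T = \vartheta$ for a fixed $\vartheta \in \mathbb{T}$, the inner conditional expectation becomes an integral of $L(\vartheta; \bm{X}_T)/L(\theta; \bm{X}_T)$ against the true conditional density of $\bm{X}_T^2$ given $\bm{X}_T^1$, which under $\mathrm{Pr}_\theta$ is exactly the product $\prod_{t=T_1+1}^{T} f(x_t \mid \bm{x}_{t-p\dots t-1}; \theta)$. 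The plan is to evaluate this integral directly.

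The core calculation is then to show that
\begin{equation}
\mathrm{E}_\theta\left[\left.\frac{L(\vartheta; \bm{X}_T)}{L(\theta; \bm{X}_T)}\right|\mathcal{F}_{T_1}\right] = \int \frac{L(\vartheta; \bm{x}_T)}{L(\theta; \bm{x}_T)}\, L(\theta; \bm{x}_T)\, \mathrm{d}\bm{x}_{(T_1+1)\dots T} = \int L(\vartheta; \bm{x}_T)\, \mathrm{d}\bm{x}_{(T_1+1)\dots T} \le 1,
\end{equation}
where the density $L(\theta; \bm{x}_T)$ cancels against the denominator, and the final integral of the conditional likelihood $L(\vartheta; \cdot)$ over the second block is at most 1 because it is the integral of a (sub-)probability density. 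Here I would rely on the autoregressive factorization stated in the excerpt: integrating $\prod_{t=T_1+1}^{T} f(x_t \mid \bm{x}_{t-p\dots t-1}; \vartheta)$ sequentially from $t = T$ down to $t = T_1+1$, each univariate conditional density integrates to 1 (or to a value $\le 1$ if densities are improper or sub-probability), collapsing the product to 1.

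The main obstacle I anticipate is handling the measurability and the substitution of the random $\hat{\Theta}_T$ carefully: because $\vartheta$ ranges over all possible realized values of the estimator, I must argue that the bound $\int L(\vartheta; \cdot)\,\mathrm{d}\bm{x} \le 1$ holds \emph{uniformly} in $\vartheta \in \mathbb{T}$, so that the bound survives taking the outer expectation over $\mathcal{F}_{T_1}$. This is clean precisely because $\int f(x_t \mid \bm{x}_{t-p\dots t-1}; \vartheta)\,\mathrm{d}x_t = 1$ for \emph{every} parameter value (each $f(\cdot \mid \cdot; \vartheta)$ being a genuine conditional PDF), so the bound does not depend on $\vartheta$ at all; the only subtlety is ensuring the conditioning sigma-algebra $\mathcal{F}_{T_1}$ correctly fixes both the estimator and the lag values $\bm{X}_{T_1-p+1\dots T_1}$ needed to initialize the conditional densities of the second block, which it does since $T_1 \ge p$.
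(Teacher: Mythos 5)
Your proposal is correct and follows essentially the same route as the paper's proof: condition on $\bm{X}_{T}^{1}$ to freeze $\hat{\Theta}_{T}$, cancel the true conditional density against the denominator, and integrate the remaining product of conditional PDFs sequentially (the paper invokes Tonelli's Theorem for this step) so that each factor integrates to $1$. Your added remark that the bound holds uniformly in the realized value $\vartheta$ of the estimator is the same point the paper handles implicitly with its $\tilde{\bm{X}}$ notation.
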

\begin{proof}
Write $\tilde{\bm{X}}_{t-p\dots t-1}=\left(\tilde{X}_{t-p},\dots,\tilde{X}_{t-1}\right)$,
where $\tilde{X}_{t}=X_{t}$, if $t\le T_{1}$, and $\tilde{X}_{t}=x_{t}$,
otherwise. Then
\begin{align*}
 & \text{E}_{\theta}\left[R_{T}\left(\theta\right)\right]\\
= & \text{E}_{\theta}\text{E}_{\theta}\left[R_{T}\left(\theta\right)|\bm{X}_{T}^{1}\right]\\
= & \text{E}_{\theta}\int_{\mathbb{X}^{T_{2}}}\frac{\prod_{t=T_{1}+1}^{T}f\left(x_{t}|\tilde{\bm{X}}_{t-p\dots t-1};\hat{\Theta}_{T}\right)}{\prod_{t=T_{1}+1}^{T}f\left(x_{t}|\tilde{\bm{X}}_{t-p\dots t-1};\theta\right)}\prod_{t=T_{1}+1}^{T}f\left(x_{t}|\tilde{\bm{X}}_{t-p\dots t-1};\theta\right)\text{d}\bm{x}_{T}^{2}\\
= & \text{E}_{\theta}\int_{\mathbb{X}^{T_{2}}}\prod_{t=T_{1}+1}^{T}f\left(x_{t}|\tilde{\bm{X}}_{t-p\dots t-1};\hat{\Theta}_{T}\right)\text{d}\bm{x}_{T}^{2}\\
\stackrel[\text{(i)}]{}{=} & \text{E}_{\theta}\int_{\mathbb{X}}\cdots\int_{\mathbb{X}}f\left(x_{T}|\tilde{\bm{X}}_{T-p\dots T-1};\hat{\Theta}_{T}\right)\text{d}x_{T}\cdots f\left(x_{T_{1}+1}|\tilde{\bm{X}}_{T_{1}-p+1\dots T_{1}};\hat{\Theta}_{T}\right)\text{d}x_{T_{1}+1}\\
\stackrel[\text{(ii)}]{}{=} & \text{E}_{\theta}1=1\text{,}
\end{align*}
where (i) is due to Tonelli's Theorem and (ii) is by definition of
conditional PDFs.
\end{proof}
With Lemma \ref{lem: main} in hand, we can now construct $100\left(1-\alpha\right)\%$
confidence sets of the form
\begin{equation}
\mathscr{C}^{\alpha}\left(\bm{X}_{T}\right)=\left\{ \theta:R_{n}\left(\theta\right)\le1/\alpha\right\} \text{.}\label{eq: Confidence Set}
\end{equation}

\begin{prop}
\label{prop: Confidence}For any $\alpha\in\left(0,1\right)$ and
$\theta_{0}\in\mathbb{T}$,
\[
\mathrm{Pr}_{\theta_{0}}\left(\theta_{0}\in\mathscr{C}^{\alpha}\left(\bm{X}_{T}\right)\right)\ge1-\alpha\text{.}
\]
\end{prop}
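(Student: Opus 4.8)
The plan is to reduce the coverage statement to a tail bound on the likelihood ratio statistic $R_T(\theta_0)$ evaluated at the true parameter, and then to control that tail through Markov's inequality using the expectation bound supplied by Lemma \ref{lem: main}.

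First I would unwind the definition of the confidence set in \eqref{eq: Confidence Set}. By construction, the event $\{\theta_0 \in \mathscr{C}^{\alpha}(\bm{X}_T)\}$ coincides exactly with the event $\{R_T(\theta_0) \le 1/\alpha\}$ (reading $R_n = R_T$). Passing to complements, the coverage probability satisfies
\[
\mathrm{Pr}_{\theta_0}\left(\theta_0 \in \mathscr{C}^{\alpha}(\bm{X}_T)\right) = 1 - \mathrm{Pr}_{\theta_0}\left(R_T(\theta_0) > 1/\alpha\right),
\]
so it suffices to establish $\mathrm{Pr}_{\theta_0}(R_T(\theta_0) > 1/\alpha) \le \alpha$.

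The key step is to observe that $R_T(\theta_0)$ is a nonnegative random variable, being a ratio of conditional likelihoods, and that Lemma \ref{lem: main} applied with $\theta = \theta_0$ gives $\mathrm{E}_{\theta_0}[R_T(\theta_0)] \le 1$. Markov's inequality then yields
\[
\mathrm{Pr}_{\theta_0}\left(R_T(\theta_0) > 1/\alpha\right) \le \alpha\, \mathrm{E}_{\theta_0}\left[R_T(\theta_0)\right] \le \alpha,
\]
and combining this with the display above completes the argument.

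I do not anticipate a genuine obstacle here: all of the substantive work — the iterated-expectation unfolding of the conditional likelihood and the appeal to Tonelli's theorem — has already been discharged in Lemma \ref{lem: main}. The only points requiring minor care are confirming the nonnegativity of $R_T(\theta_0)$ so that Markov's inequality is applicable, and noting that the bound $\mathrm{E}_{\theta_0}[R_T(\theta_0)] \le 1$ (an inequality rather than an equality) is precisely what the argument consumes and introduces no loss in the conclusion.
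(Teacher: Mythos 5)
Your proposal is correct and follows essentially the same route as the paper: reduce coverage to a tail bound on $R_T(\theta_0)$, apply Markov's inequality, and invoke Lemma \ref{lem: main} for $\mathrm{E}_{\theta_0}[R_T(\theta_0)]\le 1$. If anything, your version is slightly more careful than the paper's in taking the complement as $\{R_T(\theta_0)>1/\alpha\}$ rather than $\{R_T(\theta_0)\ge 1/\alpha\}$ and in keeping the Lemma's bound as an inequality rather than an equality.
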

\begin{proof}
By Markov's inequality
\[
\text{Pr}_{\theta_{0}}\left(R_{n}\left(\theta_{0}\right)\ge1/\alpha\right)\le\alpha\text{E}_{\theta_{0}}\left[R_{n}\left(\theta_{0}\right)\right]\stackrel[\text{(i)}]{}{=}\alpha\text{,}
\]
where (i) is by Lemma \ref{lem: main}. Then, we complete the proof
by noting that
\begin{align*}
\mathrm{Pr}_{\theta_{0}}\left(\theta_{0}\in\mathscr{C}^{\alpha}\left(\bm{X}_{T}\right)\right) & =1-\text{Pr}_{\theta_{0}}\left(R_{n}\left(\theta_{0}\right)\ge1/\alpha\right)\\
 & \ge1-\alpha\text{.}
\end{align*}
\end{proof}
To test hypotheses of form (\ref{eq: hypotheses}), we require an
additional estimator
\begin{equation}
\tilde{\Theta}_{T}\in\left\{ \tilde{\theta}\in\mathbb{T}:L\left(\tilde{\theta};\bm{X}_{T}\right)\ge L\left(\theta;\bm{X}_{T}\right)\text{, for all }\theta\in\mathbb{T}\right\} \text{.}\label{eq: MLE}
\end{equation}
Then, we may construct the test statistic
\[
S_{T}=R_{T}\left(\tilde{\Theta}_{T}\right)
\]
and its $P$-value $P_{T}=1/S_{T}$.
\begin{prop}
\label{prop: Test}For any $\alpha\in\left(0,1\right)$ and $\mathbb{T}_{0}\subset\mathbb{T}$,
\[
\sup_{\theta\in\mathbb{T}_{0}}\mathrm{Pr}_{\theta}\left(P_{T}\le\alpha\right)\le\alpha\text{.}
\]
\end{prop}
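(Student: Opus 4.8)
The plan is to reduce the claim to the Markov-inequality argument already deployed in the proof of Proposition~\ref{prop: Confidence}, the only new ingredient being the defining maximality of $\tilde{\Theta}_{T}$ from (\ref{eq: MLE}). The key observation is that, because $\tilde{\Theta}_{T}$ maximizes the conditional likelihood $L\left(\cdot;\bm{X}_{T}\right)$ over the whole of $\mathbb{T}$, for every fixed $\theta\in\mathbb{T}$ we have $L\left(\tilde{\Theta}_{T};\bm{X}_{T}\right)\ge L\left(\theta;\bm{X}_{T}\right)$, whence
\[
S_{T}=\frac{L\left(\hat{\Theta}_{T};\bm{X}_{T}\right)}{L\left(\tilde{\Theta}_{T};\bm{X}_{T}\right)}\le\frac{L\left(\hat{\Theta}_{T};\bm{X}_{T}\right)}{L\left(\theta;\bm{X}_{T}\right)}=R_{T}\left(\theta\right)\text{.}
\]

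First I would use this domination to pass to an event inclusion. Since $P_{T}=1/S_{T}$ and $S_{T}\le R_{T}\left(\theta\right)$ for every $\theta\in\mathbb{T}$, the event $\left\{ P_{T}\le\alpha\right\} =\left\{ S_{T}\ge1/\alpha\right\}$ is contained in $\left\{ R_{T}\left(\theta\right)\ge1/\alpha\right\}$, simultaneously for all $\theta$. Consequently, for any $\theta\in\mathbb{T}_{0}\subset\mathbb{T}$,
\[
\text{Pr}_{\theta}\left(P_{T}\le\alpha\right)\le\text{Pr}_{\theta}\left(R_{T}\left(\theta\right)\ge1/\alpha\right)\text{.}
\]

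Next I would apply Markov's inequality followed by Lemma~\ref{lem: main}, exactly as in Proposition~\ref{prop: Confidence}, to obtain $\text{Pr}_{\theta}\left(R_{T}\left(\theta\right)\ge1/\alpha\right)\le\alpha\,\text{E}_{\theta}\left[R_{T}\left(\theta\right)\right]\le\alpha$. Because the resulting bound $\text{Pr}_{\theta}\left(P_{T}\le\alpha\right)\le\alpha$ holds uniformly over $\theta\in\mathbb{T}_{0}$, taking the supremum over $\mathbb{T}_{0}$ then delivers the claim. The only step requiring genuine care is the domination $S_{T}\le R_{T}\left(\theta\right)$: it hinges on $\tilde{\Theta}_{T}$ being a maximizer of the \emph{full} conditional likelihood over $\mathbb{T}$, and crucially the evaluation point $\theta$ in $R_{T}\left(\theta\right)$ is then left arbitrary, so the bound applies at every candidate null value rather than merely at the true parameter. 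This is precisely what is needed to control $\sup_{\theta\in\mathbb{T}_{0}}$; everything else is a routine reuse of the Markov-plus-Lemma~\ref{lem: main} computation.
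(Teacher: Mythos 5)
Your proposal is correct and follows essentially the same route as the paper: both rest on the maximality of $\tilde{\Theta}_{T}$ from (\ref{eq: MLE}) to dominate $S_{T}$ by $R_{T}\left(\theta\right)$, then invoke Markov's inequality together with Lemma~\ref{lem: main}. The only cosmetic difference is that the paper first bounds $\mathrm{E}_{\theta}\left[S_{T}\right]\le1$ and applies Markov to $S_{T}$, whereas you pass to the event inclusion first and apply Markov to $R_{T}\left(\theta\right)$; these are interchangeable.
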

\begin{proof}
For each $\theta\in\mathbb{T}_{0}$, we have
\begin{align*}
\text{E}_{\theta}\left[S_{T}\right] & =\text{E}_{\theta}\left[\frac{L\left(\hat{\Theta}_{T};\bm{X}_{T}\right)}{L\left(\tilde{\Theta}_{T};\bm{X}_{T}\right)}\right]\\
 & \stackrel[\text{(i)}]{}{\le}\text{E}_{\theta}\left[\frac{L\left(\hat{\Theta}_{T};\bm{X}_{T}\right)}{L\left(\theta;\bm{X}_{T}\right)}\right]\\
 & =\text{E}_{\theta}\left[R_{T}\left(\theta\right)\right]\stackrel[\text{(ii)}]{}{=}1\text{,}
\end{align*}
where (i) is by definition (\ref{eq: MLE}) and (ii) is due to Lemma
\ref{lem: main}. Finally, by Markov's inequality, we have
\[
\text{Pr}_{\theta}\left(S_{T}\ge1/\alpha\right)\le\alpha\implies\text{Pr}_{\theta}\left(P_{T}\le\alpha\right)\le\alpha\text{,}
\]
as required.
\end{proof}

\subsection{Anytime valid inference}

Let
\[
M_{T}\left(\theta\right)=\frac{\prod_{t=p+1}^{T}f\left(X_{t}|\bm{X}_{t-p\dots t-1};\hat{\Theta}_{t-1}\right)}{\prod_{t=p+1}^{T}f\left(X_{t}|\bm{X}_{t-p\dots t-1};\theta\right)}\text{,}
\]
for each $T\ge p+1$, and $M_{T}\left(\theta\right)=1$, for each
$T\le p$. We firstly show that $\left(M_{T}\left(\theta\right)\right)_{T\in\mathbb{N}\cup\left\{ 0\right\} }$
is a martingale adapted to the natural filtration $\mathcal{F}_{T}=\sigma\left(X_{1},\dots,X_{T}\right)$.
Here, $\left(\hat{\Theta}_{T}\right)_{T\ge p+1}$ is a non-anticipatory
sequence of estimators of $\theta_{0}$, such that $\hat{\Theta}_{T}$
is dependent only on $\bm{X}_{T}$.
\begin{lem}
\label{lem: Martingale}For each $T\in\mathbb{N}$ and $\theta\in\mathbb{T}$,
$\text{E}_{\theta}\left[M_{T}\left(\theta\right)|\mathcal{F}_{T-1}\right]=M_{T-1}\left(\theta\right)$.
\end{lem}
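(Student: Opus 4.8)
The plan is to exploit the multiplicative, telescoping structure of $M_{T}\left(\theta\right)$, reducing the martingale property to a single conditional integral in the same spirit as the proof of Lemma~\ref{lem: main}. First I would isolate the final factor: for $T\ge p+1$, separating the $t=T$ term from the two products defining $M_{T}\left(\theta\right)$ gives
\[
M_{T}\left(\theta\right)=M_{T-1}\left(\theta\right)\,\frac{f\left(X_{T}|\bm{X}_{T-p\dots T-1};\hat{\Theta}_{T-1}\right)}{f\left(X_{T}|\bm{X}_{T-p\dots T-1};\theta\right)}\text{.}
\]
The boundary cases $T\le p$ are then immediate, since there $M_{T}\left(\theta\right)=M_{T-1}\left(\theta\right)=1$ by definition, and this recursion remains consistent at $T=p+1$ because $M_{p}\left(\theta\right)=1$.

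The second step is to determine which quantities are $\mathcal{F}_{T-1}$-measurable so that they may be extracted from the conditional expectation. By construction $M_{T-1}\left(\theta\right)$ depends only on $X_{1},\dots,X_{T-1}$, the lagged vector $\bm{X}_{T-p\dots T-1}$ is a function of the same variables, and the non-anticipatory assumption ensures $\hat{\Theta}_{T-1}$ is $\mathcal{F}_{T-1}$-measurable. Treating these as constants under the conditioning leaves only $X_{T}$ random, so
\[
\text{E}_{\theta}\left[M_{T}\left(\theta\right)|\mathcal{F}_{T-1}\right]=M_{T-1}\left(\theta\right)\,\text{E}_{\theta}\left[\frac{f\left(X_{T}|\bm{X}_{T-p\dots T-1};\hat{\Theta}_{T-1}\right)}{f\left(X_{T}|\bm{X}_{T-p\dots T-1};\theta\right)}\,\Big|\,\mathcal{F}_{T-1}\right]\text{.}
\]

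The third step evaluates the remaining conditional expectation. Under $\text{Pr}_{\theta}$, the conditional law of $X_{T}$ given $\mathcal{F}_{T-1}$ has density $f\left(x_{T}|\bm{X}_{T-p\dots T-1};\theta\right)$, so the expectation is an integral against this density; the denominator cancels the weighting density, leaving
\[
\int_{\mathbb{X}}f\left(x_{T}|\bm{X}_{T-p\dots T-1};\hat{\Theta}_{T-1}\right)\,\text{d}x_{T}=1\text{,}
\]
since any conditional PDF integrates to one. This is precisely the cancellation used at the end of Lemma~\ref{lem: main}, and it yields $\text{E}_{\theta}\left[M_{T}\left(\theta\right)|\mathcal{F}_{T-1}\right]=M_{T-1}\left(\theta\right)$.

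I expect the only delicate point to be the role of the random estimator $\hat{\Theta}_{T-1}$ in the numerator: one must be sure that its integral over $x_{T}$ equals one despite $\hat{\Theta}_{T-1}$ itself being random. This is resolved by the non-anticipatory requirement, which renders $\hat{\Theta}_{T-1}$ constant given $\mathcal{F}_{T-1}$; since $f\left(\cdot|\cdots;\vartheta\right)$ is a genuine conditional density for every fixed $\vartheta\in\mathbb{T}$, the integral equals one whatever admissible value $\hat{\Theta}_{T-1}$ takes. The remaining manipulations --- the pull-out of measurable factors and the change to an integral against the conditional density --- are routine.
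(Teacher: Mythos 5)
Your proposal is correct and follows essentially the same route as the paper's proof: factor out the $\mathcal{F}_{T-1}$-measurable ratio $M_{T-1}(\theta)$, cancel the denominator against the conditioning density, and integrate the remaining conditional PDF to one. If anything, your explicit treatment of the case $T=p+1$ via the recursion $M_{p+1}(\theta)=M_{p}(\theta)\cdot(\text{last factor})$ with $M_{p}(\theta)=1$ is slightly cleaner than the paper's dismissal of $T\le p+1$ as holding ``by definition.''
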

\begin{proof}
For $T>p+1$,
\begin{align*}
 & \text{E}_{\theta}\left[M_{T}\left(\theta\right)|\mathcal{F}_{T-1}\right]\\
= & \int_{\mathbb{X}}\frac{\prod_{t=p+1}^{T}f\left(\tilde{X}_{t}|\bm{X}_{t-p\dots t-1};\hat{\Theta}_{t-1}\right)}{\prod_{t=p+1}^{T}f\left(\tilde{X}_{t}|\bm{X}_{t-p\dots t-1};\theta\right)}f\left(x_{T}|\bm{X}_{T-p\dots T-1}\right)\text{d}x_{T}\\
= & \frac{\prod_{t=p+1}^{T-1}f\left(X_{t}|\bm{X}_{t-p\dots t-1};\hat{\Theta}_{t-1}\right)}{\prod_{t=p+1}^{T-1}f\left(X_{t}|\bm{X}_{t-p\dots t-1};\theta\right)}\int_{\mathbb{X}}f\left(x_{T}|\bm{X}_{T-p\dots T-1};\hat{\Theta}_{T-1}\right)\text{d}x_{T}\\
\stackrel[\text{(i)}]{}{=} & M_{T-1}\left(\theta\right)\text{.}
\end{align*}
where $\tilde{X}_{T}=x_{T}$ and $\tilde{X}_{t}=X_{t}$, for $t<T$.
Here, (i) is due to the properties of conditional density functions.
For $T\le p+1$, the result holds by definition.
\end{proof}
We now wish to test the hypotheses (\ref{eq: hypotheses}) in a sequential
manner. To do so, we first require an additional sequence of parameter
estimates $\left(\tilde{\Theta}_{T}\right)_{T\ge p+1}$, where
\begin{equation}
\tilde{\Theta}_{T}\in\left\{ \tilde{\theta}\in\mathbb{T}:\prod_{t=p+1}^{T}f\left(X_{t}|\bm{X}_{t-p\dots t-1};\tilde{\theta}\right)\ge\prod_{t=p+1}^{T}f\left(X_{t}|\bm{X}_{t-p\dots t-1};\theta\right)\text{, for all }\theta\in\mathbb{T}\right\} \text{.}\label{eq: MLE martingale}
\end{equation}

Define
\[
N_{T}=M_{T}\left(\tilde{\Theta}_{T}\right)
\]
for $T\ge p+1$ and $N_{T}=1$ for $T\le p$. 
\begin{prop}
\label{prop: anytime test}For each $\alpha\in\left(0,1\right)$ and
$\mathbb{T}_{0}\subset\mathbb{T}$,
\[
\sup_{\theta\in\mathbb{T}_{0}}\mathrm{Pr}_{\theta}\left(\sup_{T\ge0}N_{T}\ge1/\alpha\right)\le\alpha\text{.}
\]
\end{prop}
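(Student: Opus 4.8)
The plan is to reduce the running supremum of $N_{T}$ to that of the fixed-parameter process $M_{T}\left(\theta\right)$, and then to invoke a maximal inequality for non-negative supermartingales (Ville's inequality).

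First I would fix an arbitrary $\theta\in\mathbb{T}_{0}$ and observe that, for every $T\ge p+1$, the numerators appearing in $N_{T}=M_{T}\big(\tilde{\Theta}_{T}\big)$ and in $M_{T}\left(\theta\right)$ are identical, namely $\prod_{t=p+1}^{T}f\big(X_{t}|\bm{X}_{t-p\dots t-1};\hat{\Theta}_{t-1}\big)$. By the defining maximality property (\ref{eq: MLE martingale}), the estimator $\tilde{\Theta}_{T}$ maximizes $\prod_{t=p+1}^{T}f\big(X_{t}|\bm{X}_{t-p\dots t-1};\cdot\big)$ over $\mathbb{T}$, so that
\[
\prod_{t=p+1}^{T}f\big(X_{t}|\bm{X}_{t-p\dots t-1};\tilde{\Theta}_{T}\big)\ge\prod_{t=p+1}^{T}f\big(X_{t}|\bm{X}_{t-p\dots t-1};\theta\big)\text{.}
\]
Since these products are the (non-negative) denominators of $N_{T}$ and $M_{T}\left(\theta\right)$ respectively, this yields the pointwise domination $N_{T}\le M_{T}\left(\theta\right)$ for every $T\ge p+1$, while for $T\le p$ one has $N_{T}=1=M_{T}\left(\theta\right)$; hence $\sup_{T\ge0}N_{T}\le\sup_{T\ge0}M_{T}\left(\theta\right)$.

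Next I would recall that, by Lemma \ref{lem: Martingale}, the process $\left(M_{T}\left(\theta\right)\right)_{T\in\mathbb{N}\cup\left\{ 0\right\} }$ is a martingale under $\text{Pr}_{\theta}$ with $M_{T}\left(\theta\right)=1$ for $T\le p$; being a ratio of probability densities it is moreover non-negative, and hence a non-negative supermartingale with $\text{E}_{\theta}\left[M_{0}\left(\theta\right)\right]=1$. Ville's inequality then gives
\[
\text{Pr}_{\theta}\left(\sup_{T\ge0}M_{T}\left(\theta\right)\ge1/\alpha\right)\le\alpha\,\text{E}_{\theta}\left[M_{0}\left(\theta\right)\right]=\alpha\text{.}
\]
Combining this with the pointwise domination above shows $\text{Pr}_{\theta}\left(\sup_{T\ge0}N_{T}\ge1/\alpha\right)\le\alpha$, and since $\theta\in\mathbb{T}_{0}$ was arbitrary, taking the supremum over $\mathbb{T}_{0}$ completes the argument.

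The only genuinely substantive step is the pointwise domination $N_{T}\le M_{T}\left(\theta\right)$: once it is secured, all dependence on the adaptively chosen numerator estimators $\hat{\Theta}_{t-1}$ is absorbed into the martingale structure supplied by Lemma \ref{lem: Martingale}, and the uniform-in-$T$ (anytime) control is delivered directly by Ville's inequality. The main point to be careful about is to apply the maximal inequality in its supermartingale form valid over the infinite index set $T\ge0$, rather than a finite-horizon Doob inequality, since it is precisely this choice that renders the resulting statement anytime valid.
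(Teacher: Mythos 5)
Your proposal is correct and follows essentially the same route as the paper: pointwise domination $N_{T}\le M_{T}\left(\theta\right)$ via the maximality property (\ref{eq: MLE martingale}), followed by Ville's inequality applied to the non-negative martingale $\left(M_{T}\left(\theta\right)\right)_{T}$ supplied by Lemma \ref{lem: Martingale}. The only difference is presentational: you make explicit the shared numerator and the $T\le p$ base case, which the paper leaves implicit.
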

\begin{proof}
By Lemma \ref{lem: Martingale}, $\left(M_{T}\left(\theta\right)\right)_{T\in\mathbb{N}}$
is a Martingale, and hence by Lemma \ref{lem: Ville}, we have
\[
\text{Pr}_{\theta}\left(\sup_{T\ge0}M_{T}\left(\theta\right)\ge1/\alpha\right)\le\alpha M_{0}\left(\theta\right)\le\alpha\text{.}
\]
Note that for each $T$ and $\theta\in\mathbb{T}_{0}$,
\begin{align*}
N_{T} & =\frac{\prod_{t=p+1}^{T}f\left(X_{t}|\bm{X}_{t-p\dots t-1};\hat{\Theta}_{t-1}\right)}{\prod_{t=p+1}^{T}f\left(X_{t}|\bm{X}_{t-p\dots t-1};\tilde{\Theta}_{T}\right)}\\
 & \stackrel[\text{(i)}]{}{\le}\frac{\prod_{t=p+1}^{T}f\left(X_{t}|\bm{X}_{t-p\dots t-1};\hat{\Theta}_{t-1}\right)}{\prod_{t=p+1}^{T}f\left(X_{t}|\bm{X}_{t-p\dots t-1};\theta\right)}\\
 & =M_{T}\left(\theta\right)\text{,}
\end{align*}
where (i) is due to definition (\ref{eq: MLE martingale}). Thus,
for each $\theta\in\mathbb{T}_{0}$, we have
\[
\text{Pr}_{\theta}\left(\sup_{T\ge0}N_{T}\ge1/\alpha\right)\le\text{Pr}_{\theta}\left(\sup_{T\ge0}M_{T}\left(\theta\right)\ge1/\alpha\right)\le\alpha\text{.}
\]
\end{proof}
We observe that if we define $\bar{P}_{T}=1/N_{T}$, then the sequence
$\left(\bar{P}_{T}\right)_{T\in\mathbb{N}}$ is also valid, in the
sense that
\[
\sup_{\theta\in\mathbb{T}_{0}}\text{Pr}_{\theta}\left(\inf_{T\ge0}\bar{P}_{T}\le\alpha\right)\le\alpha\text{.}
\]
Now, we shall construct sequential confidence sets of the forms
\[
\mathscr{D}_{T}^{\alpha}=\left\{ \theta\in\mathbb{T}:M_{T}\left(\theta\right)\le1/\alpha\right\} \text{.}
\]

\begin{prop}
\label{prop: anytime conf}For any $\alpha\in\left(0,1\right)$ and
$\theta_{0}\in\mathbb{T}$,
\[
\mathrm{Pr}_{\theta_{0}}\left(\theta_{0}\in\mathscr{D}_{T}^{\alpha}\text{, for all }T\in\mathbb{N}\right)\ge1-\alpha\text{.}
\]
\end{prop}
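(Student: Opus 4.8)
The plan is to recognize this statement as a direct consequence of the martingale property established in Lemma \ref{lem: Martingale} together with Ville's inequality, mirroring the argument already carried out in the proof of Proposition \ref{prop: anytime test}. The central observation is that the event appearing in the statement is the complement of a level-crossing event for the process $\left(M_{T}\left(\theta_{0}\right)\right)_{T\in\mathbb{N}\cup\left\{0\right\}}$.

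First I would rewrite the target event. Since $\theta_{0}\in\mathscr{D}_{T}^{\alpha}$ holds exactly when $M_{T}\left(\theta_{0}\right)\le1/\alpha$, the event $\left\{\theta_{0}\in\mathscr{D}_{T}^{\alpha}\text{ for all }T\in\mathbb{N}\right\}$ coincides with $\left\{M_{T}\left(\theta_{0}\right)\le1/\alpha\text{ for all }T\right\}$, whose complement is precisely $\left\{\sup_{T\ge0}M_{T}\left(\theta_{0}\right)>1/\alpha\right\}$. Thus it suffices to control the probability that the running maximum of $M_{T}\left(\theta_{0}\right)$ ever meets or exceeds the level $1/\alpha$, and this event is contained in $\left\{\sup_{T\ge0}M_{T}\left(\theta_{0}\right)\ge1/\alpha\right\}$.

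Next I would invoke the two ingredients already in hand. By Lemma \ref{lem: Martingale}, $\left(M_{T}\left(\theta_{0}\right)\right)_{T}$ is a martingale under $\text{Pr}_{\theta_{0}}$, and it is non-negative since each factor is a conditional PDF; moreover $M_{0}\left(\theta_{0}\right)=1$ by the convention that $M_{T}\left(\theta\right)=1$ for $T\le p$. Applying Ville's inequality (Lemma \ref{lem: Ville}), exactly as in the proof of Proposition \ref{prop: anytime test}, yields
\[
\text{Pr}_{\theta_{0}}\left(\sup_{T\ge0}M_{T}\left(\theta_{0}\right)\ge1/\alpha\right)\le\alpha M_{0}\left(\theta_{0}\right)=\alpha\text{.}
\]
Taking complements then gives $\text{Pr}_{\theta_{0}}\left(\theta_{0}\in\mathscr{D}_{T}^{\alpha}\text{ for all }T\right)\ge1-\alpha$, as required.

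There is no substantive obstacle here: the entire content is packaged into Lemma \ref{lem: Martingale} and Ville's inequality, and the only subtlety is the purely set-theoretic step of identifying uniform containment in the sets $\mathscr{D}_{T}^{\alpha}$ with the non-crossing of the level $1/\alpha$ by the martingale's supremum. The one point worth double-checking is the convention at small indices (i.e.\ $T\le p$), where $M_{T}\left(\theta_{0}\right)=1\le1/\alpha$ holds trivially for $\alpha\in\left(0,1\right)$, so those indices never violate the containment and do not affect the supremum bound.
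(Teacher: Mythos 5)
Your proof is correct and follows essentially the same route as the paper: identify $\left\{\theta_{0}\in\mathscr{D}_{T}^{\alpha}\text{ for all }T\right\}$ with the event that $\sup_{T\ge0}M_{T}\left(\theta_{0}\right)$ does not cross $1/\alpha$, then apply Lemma \ref{lem: Martingale} and Ville's inequality (Lemma \ref{lem: Ville}) with $M_{0}\left(\theta_{0}\right)=1$. Your extra care in distinguishing $\sup>1/\alpha$ from $\sup\ge1/\alpha$ and in checking the $T\le p$ convention is a minor refinement of the paper's one-line argument, not a different approach.
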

\begin{proof}
Note that $\left\{ \theta_{0}\in\mathscr{D}_{T}^{\alpha}\right\} =\left\{ M_{T}\left(\theta_{0}\right)\le1/\alpha\right\} $
and so 
\[
\mathrm{Pr}_{\theta_{0}}\left(\theta_{0}\in\mathscr{D}_{T}^{\alpha}\text{, for all }T\in\mathbb{N}\right)=\text{Pr}_{\theta_{0}}\left(\sup_{T\ge0}M_{T}\left(\theta_{0}\right)\le1/\alpha\right)\stackrel[\text{(i)}]{}{\ge}1-\alpha\text{,}
\]
where (i) is due to Lemmas \ref{lem: Martingale} and \ref{lem: Ville}.
\end{proof}
Observe that by definition we also have
\[
\mathrm{Pr}_{\theta_{0}}\left(\theta_{0}\in\bar{\mathscr{D}}_{T}^{\alpha}\right)\ge1-\alpha\text{,}
\]
where $\bar{\mathscr{D}}_{T}^{\alpha}=\bigcap_{t=1}^{T}\mathscr{D}_{T}^{\alpha}$,
for each $\alpha\in\left(0,1\right)$ and $T\in\mathbb{N}$.

\section{Numerical examples}

\subsection{\label{subsec: Normal-autoregressive-model}Normal autoregressive
model}

Let $\left(X_{t}\right)_{t\in\mathbb{Z}}$ be a random sequence defined
as
\begin{equation}
X_{t}=\theta_{0}X_{t-1}+E_{t}\text{,}\label{eq: Normal AR1}
\end{equation}
where $\left(E_{t}\right)_{t\in\mathbb{Z}}$ is an IID sequence, with
$E_{t}\sim\text{N}\left(0,1\right)$, for each $t\in\mathbb{Z}$.
We shall construct a confidence interval for $\theta_{0}$ using the
finite sample (FS) procedure.

We take as data $\bm{X}_{T}$, and split the data into two halves
$\bm{X}_{T}^{1}=\left(X_{1},\dots,X_{T_{1}}\right)$ and $\bm{X}_{T}^{2}=\left(X_{T_{1}+1},\dots,X_{T}\right)$,
where $T_{1}=T/2$ (assuming that $T$ is even, for convenience).
Let $\hat{\Theta}_{T}$ be an estimator of $\theta_{0}$ depending
only on $\bm{X}_{T}^{1}$. We use $\hat{\Theta}_{T}$ to construct
the ratio
\begin{align*}
R_{T}\left(\theta\right) & =\frac{\prod_{t=T_{1}+1}^{T}f\left(X_{t}|\bm{X}_{t-p\dots t-1};\hat{\Theta}_{T}\right)}{\prod_{t=T_{1}+1}^{T}f\left(X_{t}|\bm{X}_{t-p\dots t-1};\theta\right)}\\
 & =\frac{\prod_{t=T_{1}+1}^{T}\phi\left(X_{t};\hat{\Theta}_{T}X_{t-1},1\right)}{\prod_{t=T_{1}+1}^{T}\phi\left(X_{t};\theta X_{t-1},1\right)}\\
 & =\exp\left\{ \frac{1}{2}\sum_{t=T_{1}+1}^{T}\left[\left(X_{t}-\hat{\Theta}_{T}X_{t-1}\right)^{2}-\left(X_{t}-\theta X_{t-1}\right)^{2}\right]\right\} \text{,}
\end{align*}
where 
\[
\phi\left(y;\mu,\sigma^{2}\right)=\left(2\pi\sigma^{2}\right)^{-1/2}\exp\left\{ -\frac{1}{2}\frac{\left(y-\mu\right)^{2}}{\sigma^{2}}\right\} \text{,}
\]
is the normal density function with mean $\mu\in\mathbb{R}$ and variance
$\sigma^{2}>0$.

Thus, by Proposition \ref{prop: Confidence}, we obtain $100\left(1-\alpha\right)\%$
confidence intervals (CIs) of form (\ref{eq: Confidence Set}):
\begin{equation}
\mathscr{C}^{\alpha}\left(\bm{X}_{T}\right)=\left\{ \theta\in\mathbb{R}:\frac{1}{2}\sum_{t=T_{1}+1}^{T}\left[\left(X_{t}-\hat{\Theta}_{T}X_{t-1}\right)^{2}-\left(X_{t}-\theta X_{t-1}\right)^{2}\right]\le\log\left(1/\alpha\right)\right\} \text{.}\label{eq: Normal CI}
\end{equation}
Here, we can use the typical least squares (LS) estimator
\begin{align}
\hat{\varTheta}_{T} & =\underset{\theta\in\mathbb{R}}{\arg\min}\sum_{t=2}^{T_{1}}\left(X_{t}-\theta X_{t-1}\right)^{2}=\frac{\sum_{t=2}^{T_{1}}X_{t-1}X_{t}}{\sum_{t=2}^{T_{1}}X_{t-1}^{2}}\text{.}\label{eq: Half LS}
\end{align}

We can compare the performance of CIs of form (\ref{eq: Normal CI})
to the typical asymptotic normal CIs (cf. \cite[Sec. 5.2]{Amemiya1985})
for the LS estimator
\begin{equation}
\Theta_{T}^{\text{LS}}=\frac{\sum_{t=2}^{T}X_{t-1}X_{t}}{\sum_{t=2}^{T}X_{t-1}^{2}}\text{,}\label{eq: Least squares}
\end{equation}
using the distributional limit
\begin{equation}
T^{1/2}\left(\Theta_{T}^{\text{LS}}-\theta_{0}\right)\stackrel{\text{d}}{\longrightarrow}\text{N}\left(0,1-\theta_{0}^{2}\right)\text{.}\label{eq: normal limit AR}
\end{equation}

To assess the relative performance of the FS and LS CIs, we perform
a small simulation study. We simulate $r=1000$ samples of size $T=100$
from model (\ref{eq: Normal AR1}) with $\theta_{0}=0.5$ and construct
$90\%$ CIs. To compare the performances of the CIs, we compute coverage
proportion (CP) (proportion of the $r$ CIs of each type that contain
$\theta_{0}$) and the average length (AL) of the CIs.

We obtain the results $\text{CP}_{\text{FS}}=0.998$ and $\text{CP}_{\text{LS}}=0.895$,
and $\text{AL}_{\text{FS}}=0.643$ and $\text{AL}_{\text{LS}}=0.286$.
We thus observe that both the LS and FS CIs obtain the correct nominal
level of confidence, although the FS CIs are conservative with respect to coverage.
This conservativeness is also reflected in the lengths of the intervals,
where the FS CIs over twice as long as the LS CIs. However, this is
expected given that the FS CIs are constructed only by Markov's inequality
application, whereas the LS CIs makes use of the information geometry
of the normal distribution. Figure \ref{fig: normal CIs} provides
a visualization of 20 pairs of FS and LS CIs from the simulation study.
We observe that in many cases, the FS CIs provide useful inference
regarding the presence of non-zero autocorrelation $\theta_{0}$,
even if the intervals can be larger than necessary.

\begin{figure}
\begin{centering}
\includegraphics[width=12cm]{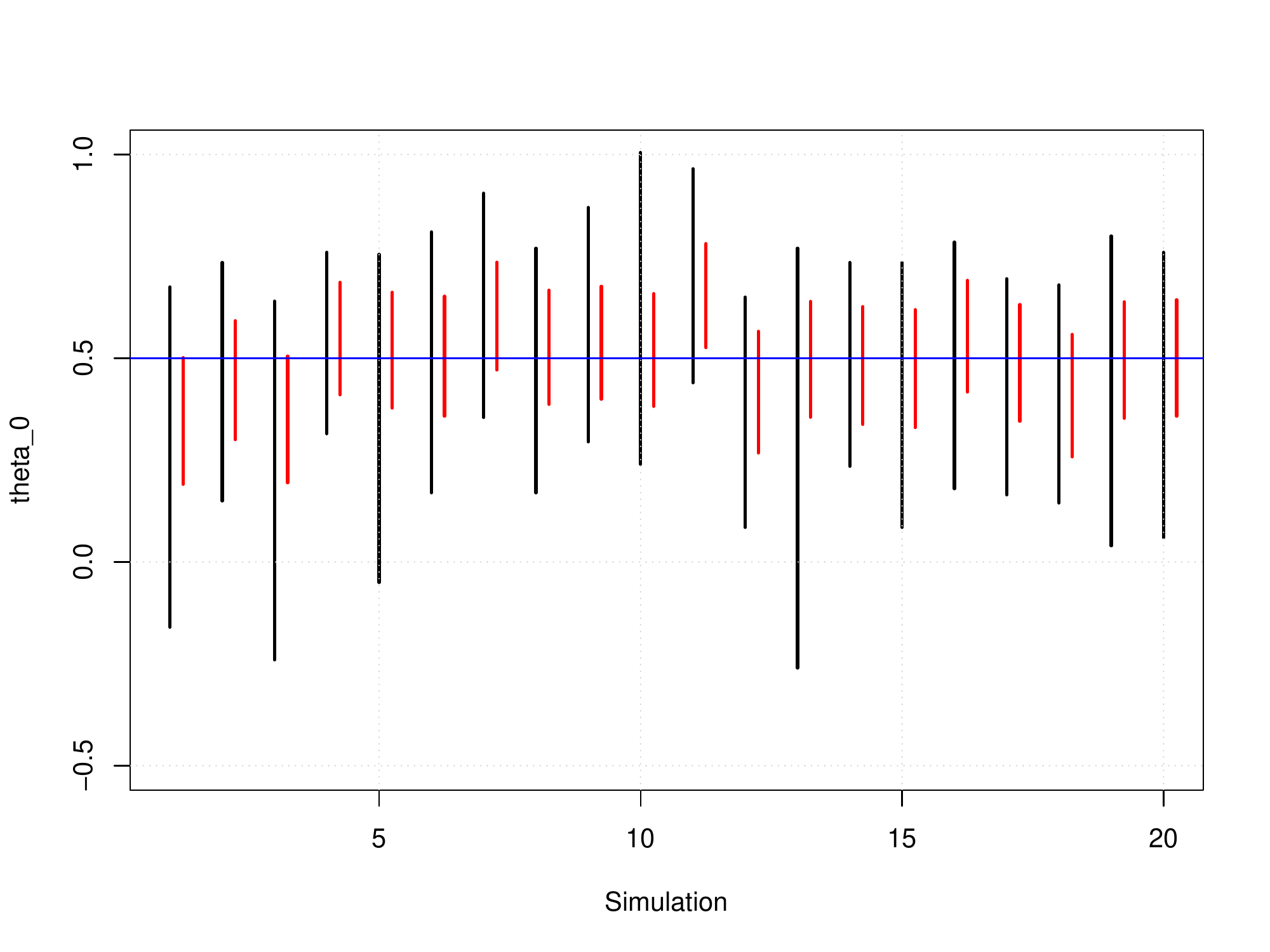}
\par\end{centering}
\caption{\label{fig: normal CIs}A visualization of 20 pairs of 90\% CIs for
$\theta_{0}=0.5$ in the normal autoregressive model. The FS CIs are
colored black and LS CIs are colored red.}
\end{figure}

\subsection{Cauchy autoregressive model}

We now consider model (\ref{eq: Normal AR1}) with $E_{t}\sim\text{Cauchy}\left(0,1\right)$,
which implies that the ratio statistic has form
\begin{align*}
R_{T}\left(\theta\right) & =\frac{\prod_{t=T_{1}+1}^{T}f\left(X_{t}|\bm{X}_{t-p\dots t-1};\hat{\Theta}_{T}\right)}{\prod_{t=T_{1}+1}^{T}f\left(X_{t}|\bm{X}_{t-p\dots t-1};\theta\right)}\\
 & =\frac{\prod_{t=T_{1}+1}^{T}\kappa\left(X_{t}-\hat{\Theta}_{T}X_{t-1}\right)}{\prod_{t=T_{1}+1}^{T}\kappa\left(X_{t}-\theta X_{t-1}\right)}\\
 & =\prod_{T=T_{1}+1}^{T}\frac{1+\left(X_{t}-\theta X_{t-1}\right)^{2}}{1+\left(X_{t}-\hat{\Theta}_{T}X_{t-1}\right)^{2}}\text{,}
\end{align*}
where $\kappa\left(y\right)=\pi^{-1}\left\{ 1/\left(1+y^{2}\right)\right\} $
is the PDF of a the law $\text{Cauchy}\left(0,1\right)$. This implies
a $100\left(1-\alpha\right)\%$ FS CI for $\theta_{0}$ of the form
\[
\mathscr{C}^{\alpha}\left(\bm{X}_{T}\right)=\left\{ \prod_{T=T_{1}+1}^{T}\frac{1+\left(X_{t}-\theta X_{t-1}\right)^{2}}{1+\left(X_{t}-\hat{\Theta}_{T}X_{t-1}\right)^{2}}\le\frac{1}{\alpha}\right\} \text{.}
\]

We again use the LS estimator $\hat{\Theta}_{T}$ to construct the
FS CI and compare our construction to the LS CI using the distributional
limit (\ref{eq: normal limit AR}) as an approximation, since the
Cauchy model does not satisfy the required regularity conditions of
\cite[Sec. 5.2]{Amemiya1985}. The comparison is made via the same
simulation study as described in Section \ref{subsec: Normal-autoregressive-model}.

We obtain the results $\text{CP}_{\text{FS}}=0.995$ and $\text{CP}_{\text{LS}}=0.944$,
and $\text{AL}_{\text{FS}}=0.236$ and $\text{AL}_{\text{LS}}=0.285$.
We notice now that the LS CIs no longer achieve the nominal 90\% confidence
level, and are now also conservative, although not as conservative
as the FS CIs. Interestingly, even though the FS CIs are more conservative,
they are on average shorter than the LS CIs. We observe this via Figure
\ref{fig: Cauchy CIs}, which visualizes 20 pairs of FS and LS CIs
from the simulation study. 

\begin{figure}
\begin{centering}
\includegraphics[width=12cm]{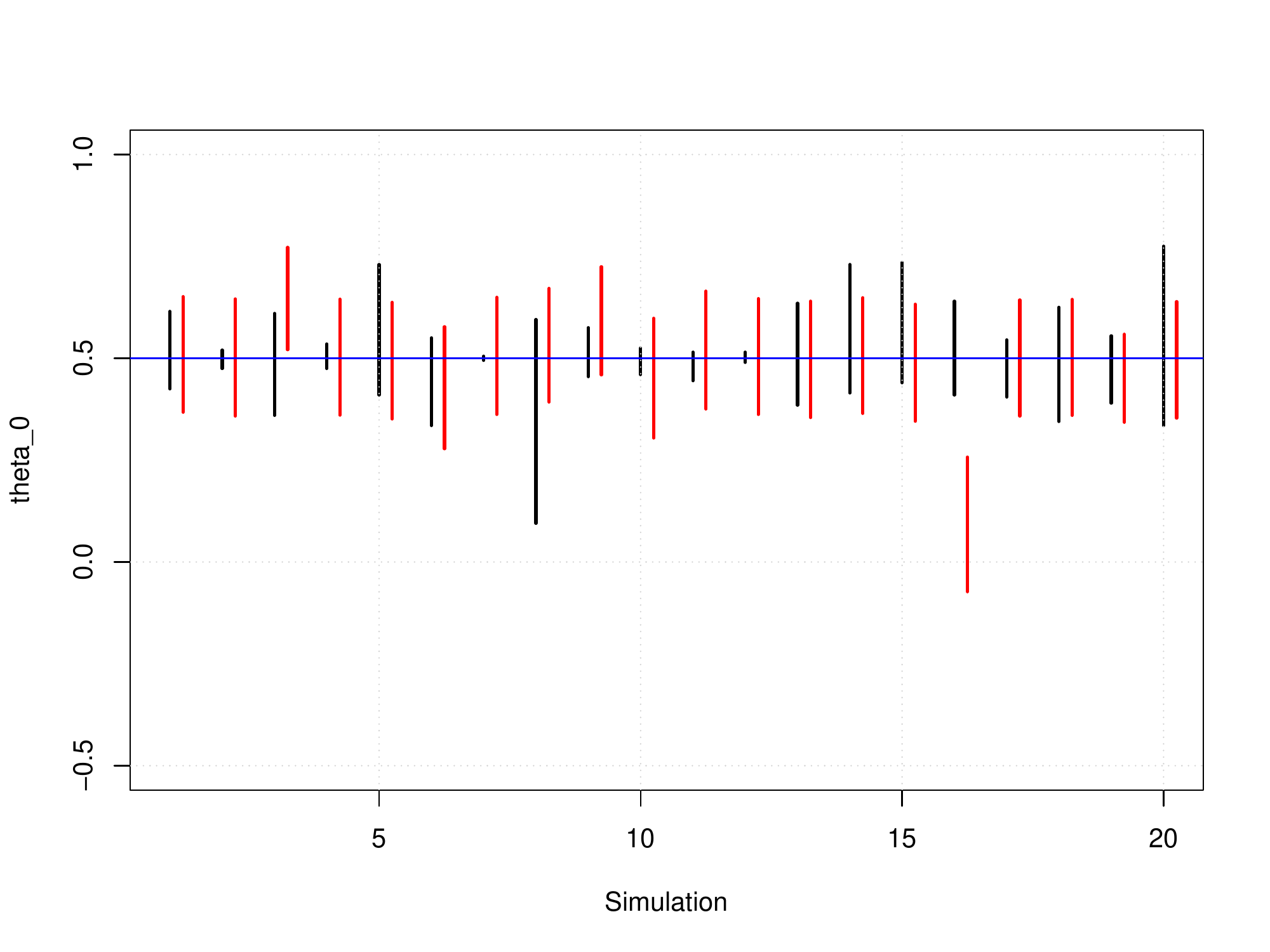}
\par\end{centering}
\caption{\label{fig: Cauchy CIs}A visualization of 20 pairs of 90\% CIs for
$\theta_{0}=0.5$ in the Cauchy autoregressive model. The FS CIs are
colored black and LS CIs are colored red.}
\end{figure}

\section{Unit root test}

We assume again Model \eqref{eq: Normal AR1}, with $E_{t}\sim\text{N}\left(0,1\right)$.
However, we now wish to test the hypotheses
\begin{equation}
\text{H}_{0}:\theta_{0}=1\text{ versus }\text{H}_{1}:\theta_{0}\in\left(-1,1\right)\text{.}\label{eq: Unit Root test}
\end{equation}
This is the classical normal unit root test setting of \cite{Dickey1979Distribution-of},
which is usually tested using the LS estimator (\ref{eq: Least squares})
as the test statistic.

Under the null hypothesis, it is known that the LS estimator has a
non-normal asymptotic distribution that is highly irregular and requires
numerical integration or simulation in order to approximate its quantiles
and density (see, e.g., \cite{Abadir1993The-limiting-di,Evans1981Testing-for-uni,Rao1978Asymptotic-dist}).
However, to perform our FS test, we can simply construct the test
statistic
\begin{equation}
S_{T}=R_{T}\left(1\right)=\exp\left\{ \frac{1}{2}\sum_{t=T_{1}+1}^{T}\left[\left(X_{t}-\hat{\Theta}_{T}X_{t-1}\right)^{2}-\left(X_{t}-X_{t-1}\right)^{2}\right]\right\} \text{,}\label{eq: Unit Root statistic}
\end{equation}
where we use (\ref{eq: Half LS}) for $\hat{\Theta}_{T}$. By Proposition
\ref{prop: Test}, $P_{T}=1/S_{T}$ is a $P$-value, satisfying $\text{Pr}_{\theta_{0}=1}\left(P_{T}\le\alpha\right)\le\alpha$.

We can assess the performance of the FS test based on statistic (\ref{eq: Unit Root statistic})
versus the usual test, based on (\ref{eq: Least squares}), using
the quantiles provided in \cite[Tab. 1]{Abadir1993The-limiting-di}.
We simulate $r=1000$ samples of size $T=1000$ and test (\ref{eq: Unit Root test})
with $\theta_{0}\in\left\{ 0,0.5,0.9,0.09,1\right\} $. We then compare
the asymptotic test to the FS test on the basis of proportion of rejection
(PR) out of the $r$ samples at the $\alpha=0.1$ level of significance.
Our results are presented in Table \ref{tab:Unit-root-test}.

\begin{table}
\caption{\label{tab:Unit-root-test}Unit root test results at the $\alpha=0.1$
level of significance.}

\begin{centering}
\begin{tabular}{ccc}
\hline 
$\theta_{0}$ & FS & Asymptotic\tabularnewline
\hline 
0.00 & 1.000 & 1.000\tabularnewline
0.50 & 1.000 & 1.000\tabularnewline
0.90 & 0.990 & 1.000\tabularnewline
0.95 & 0.904 & 1.000\tabularnewline
1.00 & 0.005 & 0.106\tabularnewline
\hline 
\end{tabular}
\par\end{centering}
\end{table}

From Table \ref{tab:Unit-root-test}, we observe that the FS test
is more conservative than the asymptotic test, as to be expected from
the previous results, along with the Markov's inequality construction.
However, the test does not require knowledge of any special distribution,
and can more easily implemented, as a tradeoff.

\section{Final remarks}
\begin{rem}
The anytime valid inference results of Propositions \ref{prop: anytime test}
and \ref{prop: anytime conf} can be stated in terms
of stopping times of the test and confidence event sequences. This
can be achieved via \cite[Lem. 3]{Howard:2020ab}.
\end{rem}
\begin{rem}
It is noteworthy that the process of splitting the data may be somewhat
arbitrary. However, one alleviate the need of making a choice by averaging
over the results of choices of splits. That is, let $\left(T_{1,i}\right)_{i\in\left[n\right]}$
be a sequence of $n$ values $T_{1,i}\in\left\{ p+1,\dots,T-1\right\} $,
for each $i\in\left[n\right]$, and let $\left(\hat{\Theta}_{T,i}\right)_{i\in\left[n\right]}$
be a sequence of estimators, where $\hat{\Theta}_{T,i}$ depends only
on the data $\left(X_{t}\right)_{t\in\left[T_{1,i}\right]}$. Then,
the averaged ratio statistic
\[
\bar{R}_{T}\left(\theta\right)=\frac{1}{n}\sum_{i=1}^{n}\frac{\prod_{t=T_{1,i}}^{T}f\left(X_{t}|\bm{X}_{t-p\dots t-1};\hat{\Theta}_{T,i}\right)}{\prod_{t=T_{1,i}}^{T}f\left(X_{t}|\bm{X}_{t-p\dots t-1};\theta\right)}
\]
is an $E$-value, in the sense that $\text{E}_{\theta}\left[\bar{R}_{T}\left(\theta\right)\right]\le1$.
Corresponding versions of Propositions \ref{prop: Confidence} and
\ref{prop: Test} then follow.

Here, a choice must still be made regarding the $n$ valued sequence
$\left(T_{1,i}\right)_{i\in\left[n\right]}$. However, one can make
all possible choices, in the sense of taking $\left(T_{1,i}\right)_{i\in\left[n\right]}=\left(p+1,\dots,T-1\right)$.
Then, we would have a ratio statistic in the form 
\[
\bar{R}_{T}\left(\theta\right)=\frac{1}{T-p-1}\sum_{i=p+1}^{T-1}\frac{\prod_{t=i}^{T}f\left(X_{t}|\bm{X}_{t-p\dots t-1};\hat{\Theta}_{T,i}\right)}{\prod_{t=i}^{T}f\left(X_{t}|\bm{X}_{t-p\dots t-1};\theta\right)}\text{,}
\]
where $\left(\hat{\Theta}_{T,i}\right)_{i\in\left\{ p+1,\dots,T-1\right\} }$
is a sequence of estimators with $\hat{\Theta}_{i}$ depending only
on $\left(X_{t}\right)_{t\in\left[i\right]}$, for each $i\in\left\{ p+1,\dots,T-1\right\} $.
This statistic is also an $E$-value and requires no user input regarding
the choice of split. However, it is a much more expensive statistic
than $R_{T}\left(\theta\right)$, since it requires $T-p-1$ estimators
to be computed, whereas $R_{T}\left(\theta\right)$ requires only
one. The user must thus make a tradeoff between computation and user
input.

Since the average of $E$-values is an $E$-value, the same discussion
can be made regarding the choice of estimator $\hat{\Theta}_{T}$.
One can choose different estimators $\hat{\Theta}_{T}$ and average
over the $R_{T}\left(\theta\right)$ statistics corresponding to each
estimator in order to produce a new statistic that remains an $E$-value.
\end{rem}
\begin{rem}
Our text focuses on ratio statistics $R_{T}\left(\theta\right)$ that
are constructed using conditional likelihood objects $L\left(\theta;\bm{X}_{T}\right)$.
However, we may replace the conditional likelihoods with conditional
composite likelihoods or conditional integrated likelihoods, in the
manner of \cite{Nguyen2021Universal-infer} and \cite{Nguyen2021Finite-sample-i},
respectively. This can be useful in situations where the likelihoods
$L\left(\theta;\bm{X}_{T}\right)$ are intractable or difficult to
compute.
\end{rem}

\section*{Appendix}

The following result is often called Ville's Lemma and a proof can
be found in \cite[Thm. 3.9]{Lattimore2020Bandit-Algorith}.
\begin{lem}
\label{lem: Ville}Let $\left(Y_{T}\right)_{T\in\mathbb{N}\cup\left\{ 0\right\} }$
be a non-negative supermartingale, then, for each $\alpha>0$,
\[
\Pr\left(\sup_{T\ge0}Y_{T}\ge1/\alpha\right)\le\alpha\mathrm{E}\left[Y_{0}\right]\text{.}
\]
\end{lem}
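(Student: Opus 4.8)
The plan is to prove this maximal inequality through a bounded-stopping-time argument combined with the optional stopping theorem for supermartingales. Fix a threshold $c \in \left(0, 1/\alpha\right)$ and define the hitting time $\tau_c = \inf\left\{T \ge 0 : Y_T \ge c\right\}$, with the convention $\inf \varnothing = \infty$. For each fixed horizon $N \in \mathbb{N}$, the time $\tau_c \wedge N$ is a bounded stopping time adapted to the underlying filtration, so the optional sampling theorem for supermartingales yields $\mathrm{E}\left[Y_{\tau_c \wedge N}\right] \le \mathrm{E}\left[Y_0\right]$.

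Next I would split this expectation across the event $\left\{\tau_c \le N\right\}$ and its complement. On $\left\{\tau_c \le N\right\}$ the process has reached the level $c$, so that $Y_{\tau_c \wedge N} = Y_{\tau_c} \ge c$; on the complement $\left\{\tau_c > N\right\}$ I would use only non-negativity, $Y_{\tau_c \wedge N} = Y_N \ge 0$. Combining these with the previous bound gives $c \, \Pr\left(\tau_c \le N\right) \le \mathrm{E}\left[Y_{\tau_c \wedge N}\right] \le \mathrm{E}\left[Y_0\right]$, and hence $\Pr\left(\tau_c \le N\right) \le \mathrm{E}\left[Y_0\right]/c$.

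I would then pass to the limit $N \to \infty$. Since the events $\left\{\tau_c \le N\right\}$ increase to $\left\{\tau_c < \infty\right\}$, continuity of the probability measure from below gives $\Pr\left(\tau_c < \infty\right) \le \mathrm{E}\left[Y_0\right]/c$. To recover the stated event, I would observe that whenever $\sup_{T \ge 0} Y_T \ge 1/\alpha > c$ there must exist some $T$ with $Y_T > c$, so that $\tau_c < \infty$; thus $\left\{\sup_{T \ge 0} Y_T \ge 1/\alpha\right\} \subseteq \left\{\tau_c < \infty\right\}$ and $\Pr\left(\sup_{T \ge 0} Y_T \ge 1/\alpha\right) \le \mathrm{E}\left[Y_0\right]/c$. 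Letting $c \uparrow 1/\alpha$ then produces the desired bound $\Pr\left(\sup_{T \ge 0} Y_T \ge 1/\alpha\right) \le \alpha \, \mathrm{E}\left[Y_0\right]$.

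The main obstacle I anticipate is not any single computation but the careful handling of the two limiting steps. The passage from the finite-horizon bound to $\left\{\tau_c < \infty\right\}$ is routine, but the boundary case in which $\sup_{T} Y_T$ equals the threshold exactly yet is never attained would break a naive identification of $\left\{\sup_{T} Y_T \ge 1/\alpha\right\}$ with $\left\{\tau_{1/\alpha} < \infty\right\}$; this is precisely why I introduce the auxiliary level $c < 1/\alpha$ and take $c \uparrow 1/\alpha$ only at the end. I would also confirm the hypotheses of the optional sampling theorem at the bounded time $\tau_c \wedge N$, which hold automatically since that stopping time is bounded and the process is adapted and integrable.
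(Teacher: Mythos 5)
Your argument is correct: the optional sampling theorem applied at the bounded stopping time $\tau_c \wedge N$, the split over $\left\{\tau_c \le N\right\}$, the passage $N \to \infty$, and the limit $c \uparrow 1/\alpha$ (which correctly handles the case where the supremum equals $1/\alpha$ without being attained) together give the stated bound. The paper offers no proof of this lemma, deferring instead to \cite[Thm. 3.9]{Lattimore2020Bandit-Algorith}; your stopping-time argument is the standard proof and is essentially the one found in that reference.
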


\bibliographystyle{plain}
\bibliography{2022_MathsETC}
\end{document}